\documentclass[12pt,reqno]{amsart}
\usepackage{amsmath}
\usepackage{amsfonts}
\usepackage{amssymb}
\usepackage{amsthm}
\usepackage[alphabetic]{amsrefs} 
\usepackage{algorithm,setspace}
\usepackage{algpseudocode}
\usepackage{booktabs}
\usepackage{mathrsfs}
\usepackage{mdframed}
\usepackage{caption}
\usepackage[most]{tcolorbox}
\usepackage[makeroom]{cancel}
\usepackage{array}
\usepackage[shortlabels]{enumitem}
\usepackage[hidelinks]{hyperref}
\hypersetup{
    colorlinks=true,
    linkcolor=magenta,      
    citecolor=magenta,     
    filecolor=magenta,
    urlcolor=magenta,
}
\usepackage[noabbrev,nosort,capitalise]{cleveref}

\usepackage[margin=1in,headsep=30pt]{geometry}
\usepackage{setspace}

\pdfpagewidth=\paperwidth
\usepackage{xcolor}
\usepackage{color,soul}
\usepackage{tikz}
\usepackage{tikz-cd}
\usepackage{tkz-euclide}
\usepackage[latin1]{inputenc}
\usepackage{fancyhdr}
\usepackage{stmaryrd}
\usetikzlibrary{
  graphs,
  graphs.standard
}
\usepackage{titlesec}

\titleformat{\section}
  {\normalfont\Large\bfseries}{\thesection }{1em}{}
\titleformat{\subsection}
  {\normalfont\large\bfseries}{\thesubsection.}{1em}{}
\titleformat{\subsubsection}
  {\normalfont\normalsize\itshape}{\thesubsubsection.}{1em}{}
\newcolumntype{C}{>{$}c<{$}}

\newmdenv[
  leftline=true,
  rightline=false,
  topline=false,
  bottomline=false,
  linewidth=2pt,
  linecolor=gray!50,
  skipabove=\baselineskip,
  skipbelow=0pt,
  innerleftmargin=10pt,
  innerrightmargin=5pt,
  innertopmargin=5pt,
  innerbottommargin=5pt
]{leftbar}

\newcommand{\arxiv}[1]
{\bgroup\color{blue!50}%
\href{http://arxiv.org/abs/#1}{\texttt{\scriptsize arXiv:#1}}\egroup}

\theoremstyle{plain}
\newtheorem{thm}{Theorem}

\newtheorem{conj}[thm]{Conjecture}
\newtheorem{prop}[thm]{Proposition}

\newtheorem{obs}[thm]{Observation}

\theoremstyle{definition}
\newtheorem*{defn}{Definition}
\newtheorem{eg}{Example}

\numberwithin{equation}{section}

\fancypagestyle{Euler}{%
  \fancyhf{} 
  
  \fancyhead[LO]{\sl Groups of Order $n$}
  \fancyhead[RE]{Shihan Kanungo}
  \fancyhead[LE,RO]{\thepage}
}%
\newtheorem{theorem}{Theorem}[section]
\newtheorem{lemma}{Lemma}

\allowdisplaybreaks

\begin{document}

\newpage
\thispagestyle{empty}

\newpage 
\setlength{\parskip}{5pt}
\thispagestyle{empty}
\setcounter{page}{1}

\begin{center}
    {\LARGE \bf Classifying Groups of Certain Orders} 
    \vskip 5pt
    {\normalsize Which integers $n$ have exactly one group of order $n$?}

    \vspace{0.3in}
    {\large  Shihan Kanungo} 
    
    \vspace{0.05in}
    {\small Euler Circle, Palo Alto, CA 94306}
\end{center}
\date{\today}

\vspace{0.2in}
\begin{center}
    {\bf Abstract}

\medskip
\begin{minipage}{12cm}
    {\footnotesize We will first discuss the question of which integers $n$ have exactly one group of order $n$, namely the cyclic group $\mathbb{Z}/n\mathbb{Z}$. We will see that these are the integers that are relatively prime to the Euler totient function $\phi(n)$. Then we discuss how many groups there are of order $p^3$ for each prime $p$. We end with a couple of interesting results and conjectures pertaining to groups of squarefree order.}
\end{minipage}
\end{center}

\setcounter{section}{0}

\vspace{5mm}
\section{Introduction}

One of the first things we learn in abstract algebra is the notion of a cyclic group. For every positive integer $n$, we have the cyclic group $\mathbb{Z}/n\mathbb{Z}$, the group of integers modulo $n$. When $n$ is prime, a simple application of Lagrange's theorem yields that this is the only group of order $n$. We may ask ourselves: what other positive integers have this property? That is, for which positive integers $n$ is every group of order $n$ cyclic?

This is not a new problem; the first solution is attributed to Burnside and has appeared in numerous papers. Dickson \cite{4} determined in 1905 those $n$ for which all groups of order $n$ are abelian. The earliest proof focusing specifically on $n$ for which all groups of order $n$ are cyclic (not just abelian) was given by Szele \cite{7} in 1947.

\begin{defn}
For $n \in \mathbb{N}$ let $f(n)$ denote the number of (isomorphism classes of) groups of order $n$.
\end{defn}

{\sc Question.} Is there a good characterization of $n$ such that $f(n) = 1$?

Considering $f(n)$ for small values of $n$ we see that $f(2)=f(3)=1$ because $2, 3$ are primes. However, for $n=4$ already we have $f(4)>1$.

\begin{eg}
$\mathbb{Z}/4\mathbb{Z}$ and $\mathbb{Z}/2\mathbb{Z}\times\mathbb{Z}/2\mathbb{Z}$ are non-isomorphic: they have different maximal orders for their elements: 4, and 2 respectively. In general, $f(p^2)>1$ because $\mathbb{Z}/p\mathbb{Z}\times\mathbb{Z}/p\mathbb{Z}$ has $p^2$ elements, has no element of order $p^2$, and is therefore not cyclic.
\end{eg}

More generally, if $p^2 \mid n$ for some prime $p$, and if $m=n/p^2$ then $f(n)>1$ because we have at least two non-isomorphic groups of order $n$: \[\mathbb{Z}/p^2\mathbb{Z}\times\mathbb{Z}/m\mathbb{Z} \qquad \text{and} \qquad \mathbb{Z}/p\mathbb{Z}\times\mathbb{Z}/p\mathbb{Z}\times\mathbb{Z}/m\mathbb{Z}.\] Thus, at the very least, we need $n$ to be squarefree.

\begin{defn}
Integers that are divisible by no perfect square other than 1 are called \textit{squarefree integers}.
\end{defn}

For example, $10=2\cdot5$ is squarefree, but $18=2\cdot3^2$ is not.

\medskip
\begin{obs}
If $f(n)=1$ then $n$ must be squarefree.
\end{obs}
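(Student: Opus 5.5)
We argue by contraposition: if $n$ is not squarefree, we exhibit two non-isomorphic groups of order $n$, so $f(n)\ge 2$. This is the argument sketched just before the statement, and the plan is to make it rigorous.

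Suppose $n$ is not squarefree. By definition some perfect square $d^2>1$ divides $n$. Taking any prime $p$ dividing $d$ gives $p^2\mid n$. Write $m=n/p^2$, a positive integer, and consider
\[G_1=\mathbb{Z}/p^2\mathbb{Z}\times\mathbb{Z}/m\mathbb{Z},\qquad G_2=\mathbb{Z}/p\mathbb{Z}\times\mathbb{Z}/p\mathbb{Z}\times\mathbb{Z}/m\mathbb{Z}.\]
Both have order $p^2m=n$.

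To show $G_1\not\cong G_2$, compare an isomorphism invariant. Isomorphisms preserve the orders of elements, so they preserve the maximum element order (the exponent). The order of $(a,b)$ in a direct product is the lcm of the component orders. Hence the exponent of $G_1$ is $\operatorname{lcm}(p^2,m)$ and that of $G_2$ is $\operatorname{lcm}(p,m)$.

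The only delicate point is that these exponents really differ even when $p\mid m$. Write $m=p^k m'$ with $p\nmid m'$. Then:
\begin{itemize}
\item $\operatorname{lcm}(p^2,m)=p^{\max(2,k)}m'$;
\item $\operatorname{lcm}(p,m)=p^{\max(1,k)}m'$.
\end{itemize}
When $k\le 1$ these differ. When $k\ge 2$ they coincide, so the exponent alone does not always distinguish the groups.

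For the general case, count elements of order dividing $p$ instead. In $G_1$ this number is
\[p\cdot \#\{x\in\mathbb{Z}/m\mathbb{Z}: px=0\}=p\cdot p^{\min(k,1)},\]
while in $G_2$ it is $p^2\cdot p^{\min(k,1)}$. Since $p<p^2$, the counts differ, so $G_1\not\cong G_2$ for every $k$. This count is also an isomorphism invariant and handles all cases uniformly, so it is the cleaner choice.

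This step is the main obstacle, because the paper's intuitive "maximal order" argument from the example for $p^2$ fails when $p^3\mid n$. Everything else is routine.
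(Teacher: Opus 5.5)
Your proof is correct and follows the paper's argument: the same contrapositive with the same pair $\mathbb{Z}/p^2\mathbb{Z}\times\mathbb{Z}/m\mathbb{Z}$ and $\mathbb{Z}/p\mathbb{Z}\times\mathbb{Z}/p\mathbb{Z}\times\mathbb{Z}/m\mathbb{Z}$. Your count of elements killed by $p$ ($p^{1+\min(k,1)}$ versus $p^{2+\min(k,1)}$) usefully supplies the non-isomorphism the paper only asserts, though by your own computation the exponent comparison breaks down only when $k\ge 2$, i.e.\ when $p^4\mid n$, not when $p^3\mid n$ as your closing remark says.
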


However, the converse is false, since $f(6)=2$.

\begin{eg}[The dihedral group $D_3$]
$D_3$ has 6 elements and is generated by the two elements $\{\rho, \tau\}$ where $\rho$ is counterclockwise rotation by $2\pi/3$, and $\tau$ is the reflection across the line $y=0$. Moreover, we have the relation $\rho\tau=\tau\rho^{-1}$, so $D_3$ is not abelian.
\end{eg}

In general, for any integer $n>1$, the dihedral group $D_n$ has $2n$ elements, and is generated by $\{\rho, \tau\}$ with $\rho\tau=\tau\rho^{-1}$. Thus $D_n$ is not abelian, and $f(2n)>1$. Thus, we get

\begin{obs}
If $f(n)=1$, then either $n=2$ or $n$ must be a squarefree odd integer.
\end{obs}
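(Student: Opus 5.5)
The observation follows from two facts: the previous Observation (squarefreeness) and the remark that every even integer $n \geq 6$ admits a nonabelian group of order $n$, namely a dihedral group. So the plan is a case split on the parity of $n$, assuming throughout that $f(n)=1$.

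First, by the previous Observation, $f(n)=1$ forces $n$ to be squarefree. If $n$ is odd, we are done: $n$ is a squarefree odd integer.

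If $n$ is even, write $n = 2m$. I would show that $m=1$. Suppose instead that $m > 1$. Then the dihedral group $D_m$ is defined, as discussed just before this statement: it has $2m = n$ elements and is generated by $\rho,\tau$ with $\rho\tau = \tau\rho^{-1}$. The one point needing care is that $D_m$ is nonabelian, which requires $\rho \neq \rho^{-1}$, i.e.\ that $\rho$ has order $m \geq 3$. For $m = 2$ the relation gives nothing, since $D_2 \cong \mathbb{Z}/2\mathbb{Z}\times\mathbb{Z}/2\mathbb{Z}$ is abelian. That case is $n=4$, which is not squarefree and so is already excluded by the previous Observation. We may therefore assume $m \geq 3$. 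In that case $D_m$ is a nonabelian group of order $n$, while $\mathbb{Z}/n\mathbb{Z}$ is abelian. These two groups are non-isomorphic, so $f(n) \geq 2$, a contradiction. Hence $m = 1$ and $n = 2$.

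Combining the two cases: if $f(n)=1$ then either $n=2$ or $n$ is an odd squarefree integer (here we also allow $n=1$, which is odd and squarefree). The only step needing any care is the check that $D_m$ is genuinely nonabelian, i.e.\ that $\rho \ne \rho^{-1}$ once $m\ge 3$. This holds because $\rho$ is rotation by $2\pi/m$, which has order exactly $m$.
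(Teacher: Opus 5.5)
Your proof is correct and follows the paper's route. Squarefreeness comes from the preceding Observation, and the dihedral group $D_m$ supplies a nonabelian group of order $2m$, which rules out even $n>2$. Your treatment of $m=2$ is more careful than the paper, which asserts that $D_n$ is nonabelian for every $n>1$ even though $D_2\cong\mathbb{Z}/2\mathbb{Z}\times\mathbb{Z}/2\mathbb{Z}$; excluding $n=4$ via squarefreeness (equivalently, noting that $m$ must be odd) closes that small gap.
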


Once again, the converse is false, since $f(21)=2$. This time, however, the reason is not so obvious. It is not straightforward to come up with a group of order 21 that is not isomorphic to $\mathbb{Z}/21\mathbb{Z}$. We need to introduce the notion of a semidirect product of groups.

\section{The Semidirect Product}
We start with a definition:
\begin{defn}
Let $H$ and $K$ be groups and let $\psi:K\rightarrow \operatorname{Aut}(H)$ be a homomorphism. Let $G=\{(h,k):h\in H \text{ and } k\in K\}$. Define multiplication on $G$ by
\[ (h_1,k_1)(h_2,k_2)=(h_1\psi(k_1)(h_2),k_1k_2). \]
This multiplication makes $G$ a group of order $|G|=|K||H|$ where the identity of $G$ is $(e_H,e_K)$, and $(h,k)^{-1}=(\psi(k^{-1})(h^{-1}),k^{-1})$ is the inverse of $(h, k)$. The group $G$ is called the \textit{semidirect product} of $H$ and $K$ with respect to $\psi$ (denoted by $H\rtimes_{\psi}K$).
\end{defn}

{\sc Remark} (The semidirect product is the direct product if the homomorphism is trivial). 
Suppose $H$ and $K$ are groups and $\psi:K\rightarrow \operatorname{Aut}(H)$ is the trivial homomorphism, i.e. $\psi(k)=\operatorname{id}$ for all $k\in K$. Then
\[ (h_1,k_1)(h_2,k_2)=(h_1\cdot\psi(k_1)(h_2),k_1k_2)=(h_1h_2,k_1k_2). \]
Hence $H\rtimes_{\psi}K\cong H\times K$.

The set of automorphisms of a group plays a central role in the study of semidirect products. The semidirect product is distinct from the direct product only if there is some non-trivial homomorphism $\psi:K\rightarrow \operatorname{Aut}(H)$, which only happens if $|K|$ divides $|\operatorname{Aut}(H)|$. Let us remind ourselves how to compute $|\operatorname{Aut}(H)|$ when $H$ is a cyclic group.

\begin{prop}
$\operatorname{Aut}(\mathbb{Z}/n\mathbb{Z})\cong(\mathbb{Z}/n\mathbb{Z})^{\times}$ and this is an abelian group of order $\phi(n)$.
\end{prop}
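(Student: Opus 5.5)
The plan is to write down an explicit map and check that it is an isomorphism. The key point is that an automorphism of $\mathbb{Z}/n\mathbb{Z}$ is completely determined by where it sends the generator $1$. So for $\sigma\in\operatorname{Aut}(\mathbb{Z}/n\mathbb{Z})$, write $\sigma(1)=a$. Then additivity forces $\sigma(k)=k\sigma(1)=ka$ for every $k$. We therefore define
\[ \Theta:\operatorname{Aut}(\mathbb{Z}/n\mathbb{Z})\to(\mathbb{Z}/n\mathbb{Z})^{\times},\qquad \Theta(\sigma)=\sigma(1), \]
and the proof consists of checking that $\Theta$ is well defined, a homomorphism, injective, and surjective.

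\emph{Well defined.} We must show that $a=\sigma(1)$ is a unit mod $n$. Since $\sigma$ is a bijection, its image $\{ka\}$ is all of $\mathbb{Z}/n\mathbb{Z}$. In particular $a$ generates $\mathbb{Z}/n\mathbb{Z}$, so $ka\equiv 1 \pmod n$ for some $k$, and hence $a\in(\mathbb{Z}/n\mathbb{Z})^{\times}$. Equivalently, $a$ has order $n$, which happens exactly when $\gcd(a,n)=1$.

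\emph{Homomorphism.} For $\sigma,\tau\in\operatorname{Aut}(\mathbb{Z}/n\mathbb{Z})$ we compute
\[ (\sigma\circ\tau)(1)=\sigma(\tau(1))=\tau(1)\,\sigma(1), \]
using $\sigma(k)=k\sigma(1)$ with $k=\tau(1)$. So $\Theta(\sigma\tau)=\Theta(\sigma)\Theta(\tau)$, because multiplication in $(\mathbb{Z}/n\mathbb{Z})^{\times}$ is commutative.

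\emph{Injective.} This follows from the determination remark above: if $\sigma(1)=\tau(1)$, then $\sigma(k)=\tau(k)$ for all $k$, so $\sigma=\tau$.

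\emph{Surjective.} This is the one step that needs any care. Given a unit $a$, define $\mu_a(k)=ka$.
\begin{itemize}
\item $\mu_a$ is well defined mod $n$, since $k\equiv k' \pmod n$ implies $ka\equiv k'a \pmod n$.
\item $\mu_a$ is additive by the distributive law.
\item $\mu_a$ is a bijection, because $\mu_{a^{-1}}$ is a two-sided inverse. Here $a^{-1}$ exists precisely because $a$ is a unit.
\end{itemize}
Thus $\mu_a$ is an automorphism with $\Theta(\mu_a)=a$.

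\emph{Conclusion.} $\Theta$ is an isomorphism. The group $(\mathbb{Z}/n\mathbb{Z})^{\times}$ is abelian because multiplication of integers mod $n$ is commutative, so $\operatorname{Aut}(\mathbb{Z}/n\mathbb{Z})$ is abelian too. Finally, $|(\mathbb{Z}/n\mathbb{Z})^{\times}|$ is the number of residues $1\le a\le n$ with $\gcd(a,n)=1$, which is $\phi(n)$ by definition. This uses the standard fact (B\'ezout) that $a$ is invertible mod $n$ if and only if $\gcd(a,n)=1$.

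I expect no real obstacle here. The only point to be careful about is the well-definedness and invertibility of $\mu_a$, which is where the coprimality of $a$ and $n$ is used.
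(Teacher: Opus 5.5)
Your proof is correct and follows essentially the same route as the paper: the map $\sigma\mapsto\sigma(1)$, the homomorphism check via $\sigma(\tau(1))=\tau(1)\sigma(1)$, surjectivity via multiplication by a unit $a$ with inverse given by multiplication by $a^{-1}$, and injectivity because a homomorphism of a cyclic group is determined by the image of the generator. You also check that $\sigma(1)$ is actually a unit, so that the map lands in $(\mathbb{Z}/n\mathbb{Z})^{\times}$; the paper's proof leaves this point implicit.
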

\begin{proof}
Define $\phi : \mathrm{Aut}(\mathbb Z/n(\mathbb Z)\to (\mathbb Z/n\mathbb Z)^\times$ by $\phi(f)=f(1)$. 

We will show that $\phi$ is an isomorphism, i.e., an homomorphism that is injective and surjective. 
First, $\phi$ is a homomorphism because  \[ \phi(fg)=(f\circ g)(1)=f(g(1)) = g(1)f(1).\] To see that $\phi$ is surjective, suppose $k\in (\mathbb Z/n\mathbb Z)^\times$. We need $f\in \mathrm{Aut}((\mathbb Z/n(\mathbb Z)$ such that $\phi(f)=f(1)=k$. To this end, define \[ f, g : \mathbb Z/n\mathbb Z \quad \text{by}\quad f(1)=k, \ g(1)=k^{-1}\] It is easily checked that $g$ is the inverse of $f$. Thus $f\in \mathrm{Aut}((\mathbb Z/n(\mathbb Z)$ and  $\phi(f)=f(1)=k$. Finally, we note that $\phi$ is injective: if $f(1)=1$ for some $f\in \mathrm{Aut}((\mathbb Z/n(\mathbb Z)$, then $f(a)=a$ for all $a\in \mathbb Z/n\mathbb Z$. This shows that $\operatorname{Aut}(\mathbb{Z}/n\mathbb{Z})\cong(\mathbb{Z}/n\mathbb{Z})^{\times}$. It is clear that $(\mathbb Z/n\mathbb Z)^\times$ is an abelian group of order $\phi(n)$. This completes the proof.
\end{proof}

\medskip
The way we will be using semidirect products is by showing that for some values of $n$, there is a group of order $n$ that is isomorphic to a semidirect product of some two of its subgroups. The following proposition makes this idea concrete.

\begin{prop}
Suppose $H$ and $K$ are subgroups of group $G$ such that $H\triangleleft G$, $H\cap K=\{e\}$, and $|G|=|H|\cdot|K|$. Then $G\cong H\rtimes_{\psi}K$ for some $\psi:K\rightarrow \operatorname{Aut}(H)$.
\end{prop}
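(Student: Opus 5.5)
The plan is to take $\psi$ to be conjugation and show that the natural multiplication map from $H\rtimes_\psi K$ to $G$ is an isomorphism.

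\textbf{Defining $\psi$.} Since $H\triangleleft G$, for each $k\in K$ the map $\psi(k):H\to H$, $h\mapsto khk^{-1}$, lands in $H$. It is an automorphism of $H$, with inverse $\psi(k^{-1})$. Moreover $\psi(k_1k_2)=\psi(k_1)\circ\psi(k_2)$, because $(k_1k_2)h(k_1k_2)^{-1}=k_1(k_2hk_2^{-1})k_1^{-1}$. So $\psi:K\to\operatorname{Aut}(H)$ is a homomorphism, and we may form $H\rtimes_\psi K$ as in the definition above.

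\textbf{The map and the homomorphism check.} Define $\Phi: H\rtimes_\psi K\to G$ by $\Phi(h,k)=hk$. To see that $\Phi$ respects multiplication, compute
\[\Phi\big((h_1,k_1)(h_2,k_2)\big)=\Phi(h_1k_1h_2k_1^{-1},\,k_1k_2)=h_1k_1h_2k_1^{-1}k_1k_2=(h_1k_1)(h_2k_2),\]
which equals $\Phi(h_1,k_1)\Phi(h_2,k_2)$.

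\textbf{Bijectivity.} For injectivity, suppose $hk=e$. Then $h=k^{-1}\in H\cap K=\{e\}$, so $h=k=e$ and the kernel is trivial. For surjectivity, both groups are finite and have the same order, since $|H\rtimes_\psi K|=|H||K|=|G|$ by the definition of the semidirect product. An injective map between finite sets of equal size is bijective. Hence $\Phi$ is an isomorphism and $G\cong H\rtimes_\psi K$.

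\textbf{Where care is needed.} There is no serious obstacle. The only points to watch are using normality of $H$ so that conjugation preserves $H$, and checking the order of composition in $\psi(k_1k_2)$ so that $\psi$ is a homomorphism and not an anti-homomorphism. Both are handled by the convention $h\mapsto khk^{-1}$. We also use that $G$ is finite, which is implicit in the hypothesis $|G|=|H|\cdot|K|$.
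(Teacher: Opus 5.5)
Your proof is correct and is essentially the standard argument that the paper cites rather than reproduces (Dummit and Foote, \S5.5, Theorem 12): take $\psi$ to be conjugation by $K$ on $H$ and show that $(h,k)\mapsto hk$ is an isomorphism from $H\rtimes_\psi K$ onto $G$. The only cosmetic difference is how bijectivity is obtained: Dummit and Foote use uniqueness of the expression $hk$ in $HK$ together with $G=HK$, while you use trivial kernel plus counting, which amounts to the same thing (and your remark that finiteness of $G$ is needed is apt).
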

\begin{proof}
    See \S 5.5, Theorem 12, in \cite{DummitFoote}.
\end{proof}
\noindent \textsc{note:} When this happens, $K$ is said to be a \textit{complement of} $H$ in $G$.

We can now understand why $f(21)=2$. This is because $21=3\cdot7$ and $3 \mid (7-1)$.

\begin{eg}[Groups of order $pq$ with $p\equiv1 \pmod q$]\phantom{.}\newline
Let $G$ be a group of order $pq$ where $p>q$ are distinct primes. Then there is only one Sylow $p$-subgroup $P$, which is therefore normal. Let $Q$ be a Sylow $q$-subgroup. Then $Q$ is a complement of $P$ in $G$, so $G$ is a semidirect product $P\rtimes_{\psi}Q$, for some homomorphism \[\psi:Q\rightarrow \operatorname{Aut}(P).\] Since $q \mid (p-1)$ then $\operatorname{Aut}(P)\cong \operatorname{Aut}(\mathbb{Z}/p\mathbb{Z})$ has a unique subgroup of order $q$, and $\psi$ can be an isomorphism from $\mathbb{Z}/q\mathbb{Z}$ to this subgroup. We can choose a generator for $\mathbb{Z}/q\mathbb{Z}$ to map to a specified element of order $q$ in $\operatorname{Aut}(\mathbb{Z}/p\mathbb{Z})$. So there is, up to isomorphism, a unique semidirect product which is not a direct product. In other words, the number of groups of order $pq$ (up to isomorphism) is 2 if $q \mid (p-1)$ and 1 otherwise.
\end{eg}

We can gather all these observations into one useful lemma.

\begin{lemma}
If $n=pq$ where $p \mid (q-1)$ then there exists a semidirect product of the cyclic group of order $p$ and the cyclic group of order $q$. In particular, $f(n)=2$.
\end{lemma}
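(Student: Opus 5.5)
The plan has two parts. First, construct a nonabelian semidirect product of order $pq$. Second, show that every group of order $pq$ is either cyclic or isomorphic to that one group. Throughout, $p$ and $q$ are primes with $p \mid (q-1)$. In particular $p<q$, so the roles are those of the preceding Example with the letters swapped.

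\textbf{Construction.} Let $H=\mathbb{Z}/q\mathbb{Z}$ and $K=\mathbb{Z}/p\mathbb{Z}$. By the Proposition on automorphisms of cyclic groups, $\operatorname{Aut}(H)\cong(\mathbb{Z}/q\mathbb{Z})^\times$, which has order $q-1$. This group is cyclic because $q$ is prime; I will take that as a standard fact, since the paper's Example already uses it. Since $p\mid q-1$, it contains an element $\alpha$ of order exactly $p$. Define $\psi:K\to\operatorname{Aut}(H)$ by $\psi(k)=\alpha^k$. This is well defined because $\alpha^p=\operatorname{id}$, and it is a nontrivial homomorphism. Form $G=H\rtimes_\psi K$, which has order $pq$. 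To see that $G$ is not abelian, take $h\in H$ with $\alpha(h)\ne h$. Then $(0,1)(h,0)=(\alpha(h),1)$, while $(h,0)(0,1)=(h,1)$, and these differ. So $G\not\cong\mathbb{Z}/pq\mathbb{Z}$, and therefore $f(n)\ge 2$.

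\textbf{Upper bound.} Now let $G$ be any group of order $pq$. By Sylow's theorems, $n_q\equiv 1\pmod q$ and $n_q\mid p<q$, so $n_q=1$. Hence the Sylow $q$-subgroup $Q$ is normal. Let $P$ be a Sylow $p$-subgroup. Then $P\cap Q=\{e\}$ by Lagrange, and $|P||Q|=|G|$. The Proposition on complements then gives $G\cong Q\rtimes_\psi P$ for some $\psi:P\to\operatorname{Aut}(Q)$. If $\psi$ is trivial, the Remark gives $G\cong \mathbb{Z}/q\times\mathbb{Z}/p\cong\mathbb{Z}/pq$.

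\textbf{Main obstacle.} The step that needs care is showing that all nontrivial $\psi$ give isomorphic groups. Any nontrivial $\psi$ sends a generator of $P$ to an element of order $p$. Because $\operatorname{Aut}(Q)$ is cyclic, its elements of order $p$ are exactly $\alpha,\alpha^2,\dots,\alpha^{p-1}$, where $\alpha$ is a fixed element of order $p$. So $\psi(1)=\alpha^j$ for some $j$ with $1\le j\le p-1$, and $j$ is invertible mod $p$. Precomposing $\psi$ with the automorphism $k\mapsto jk$ of $K$ turns it into the map $1\mapsto\alpha^j$. I would then check directly that the map $(h,k)\mapsto(h,j^{-1}k)$ is an isomorphism $H\rtimes_{\psi_j}K\to H\rtimes_{\psi_1}K$, where $\psi_j(1)=\alpha^j$ and $\psi_1(1)=\alpha$. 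This is a one-line computation with the multiplication rule. It follows that there are exactly two groups of order $pq$, so $f(n)=2$.
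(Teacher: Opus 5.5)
Your argument follows the paper's Example. You take a normal Sylow subgroup for the larger prime and the other Sylow subgroup as a complement. A nontrivial $\psi$ must land in the unique subgroup of order $p$ of the cyclic group $\operatorname{Aut}(\mathbb{Z}/q\mathbb{Z})$, and re-choosing the generator of $K$ removes the dependence on $\psi$. You are more complete than the paper, which never builds the nonabelian group explicitly or checks that it is not cyclic.

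There is one concrete error, in the step you flag as the main obstacle. The map $(h,k)\mapsto(h,j^{-1}k)$ is not in general a homomorphism $H\rtimes_{\psi_j}K\to H\rtimes_{\psi_1}K$. The image of $(h_1,k_1)(h_2,k_2)$ has first coordinate $h_1+\alpha^{jk_1}(h_2)$. The product of the images has first coordinate $h_1+\alpha^{j^{-1}k_1}(h_2)$. These agree for all $h_2,k_1$ only when $j^2\equiv 1\pmod p$, so the map already fails for $p=5$, $q=11$, $j=2$.

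The correct map in that direction is $(h,k)\mapsto(h,jk)$. It works because $\psi_1(jk)=\alpha^{jk}=\psi_j(k)$, i.e.\ $\psi_j=\psi_1\circ\theta$ with $\theta(k)=jk$. More generally, for any $\theta\in\operatorname{Aut}(K)$, the map $(h,k)\mapsto(h,\theta(k))$ is an isomorphism $H\rtimes_{\psi\circ\theta}K\to H\rtimes_{\psi}K$. Your map $(h,k)\mapsto(h,j^{-1}k)$ is the inverse of the correct one, so it goes from $H\rtimes_{\psi_1}K$ to $H\rtimes_{\psi_j}K$.

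Your precomposition sentence has the same mix-up. Precomposing $\psi_j$ with $k\mapsto jk$ gives $1\mapsto\alpha^{j^2}$, not $1\mapsto\alpha^j$; it is $\psi_1$ that should be precomposed. With this correction, your proof is complete.
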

\begin{proof}
See Example above.
\end{proof}

It is easy to see that this can, in fact, be extended to any squarefree integer $n$.

\begin{prop}
If $n=p_1 p_2 \cdots p_k$ where the $p_i$ are distinct primes and $p_i \mid (p_j-1)$ for some $i\ne j$, then $f(n)>1$.
\end{prop}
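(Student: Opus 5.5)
Take $p=p_i$ and $q=p_j$ with $p \mid (q-1)$. By the preceding Lemma (equivalently the Example on groups of order $pq$), there is a non-abelian group $N$ of order $pq$, namely a semidirect product $\mathbb{Z}/q\mathbb{Z}\rtimes_{\psi}\mathbb{Z}/p\mathbb{Z}$ with $\psi$ nontrivial. We will embed this into a group of order $n$ by taking a direct product with a cyclic group. Let $m = n/(pq)=\prod_{l\ne i,j} p_l$, and set
\[ G = N \times \mathbb{Z}/m\mathbb{Z}. \]
Then $|G| = pq\cdot m = n$.

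Next we show $G$ is not isomorphic to $\mathbb{Z}/n\mathbb{Z}$. It suffices to show that $G$ is non-abelian, since every cyclic group is abelian. The subgroup $N\times\{0\}$ is a copy of $N$ inside $G$, and $N$ is non-abelian. Indeed, choose $k\in\mathbb{Z}/p\mathbb{Z}$ with $\psi(k)\neq \operatorname{id}$ and $h\in\mathbb{Z}/q\mathbb{Z}$ with $\psi(k)(h)\ne h$. Then $(0,k)(h,0)=(\psi(k)(h),k)$ while $(h,0)(0,k)=(h,k)$, so these two elements do not commute. Hence $G$ is a group of order $n$ not isomorphic to $\mathbb{Z}/n\mathbb{Z}$, and together with $\mathbb{Z}/n\mathbb{Z}$ itself this gives $f(n)\ge 2$.

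The only point requiring care is confirming that $\psi$ can be chosen nontrivial, which is the content of the earlier Example. Since $p\mid (q-1)=|\operatorname{Aut}(\mathbb{Z}/q\mathbb{Z})|$ and $\operatorname{Aut}(\mathbb{Z}/q\mathbb{Z})\cong(\mathbb{Z}/q\mathbb{Z})^\times$ is cyclic, Cauchy's theorem gives an element of order $p$, and sending a generator of $\mathbb{Z}/p\mathbb{Z}$ to it defines $\psi$. Everything else is routine, and the case $m=1$ (that is, $k=2$) is just the Lemma itself.
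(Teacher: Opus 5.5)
Your proof is correct and follows the same route as the paper: you take the nonabelian group of order $pq$ and form its direct product with $\mathbb{Z}/m\mathbb{Z}$, where $m=n/(pq)$. You also write the semidirect product in the right order, $\mathbb{Z}/q\mathbb{Z}\rtimes_\psi\mathbb{Z}/p\mathbb{Z}$; under the paper's own convention, its $\mathbb{Z}/p\mathbb{Z}\rtimes_\psi\mathbb{Z}/q\mathbb{Z}$ would force $\psi$ to be trivial, since $q\nmid p-1$. Finally, you explicitly verify non-commutativity, which the paper leaves implicit.
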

\begin{proof}
If $n=pqm$ where $p \mid (q-1)$ and $\operatorname{gcd}(pq,m)=1$ then there is a (nontrivial) semidirect product $\mathbb{Z}/p\mathbb{Z}\rtimes_{\psi}\mathbb{Z}/q\mathbb{Z}$ and therefore the following two groups \[(\mathbb{Z}/p\mathbb{Z}\rtimes_{\psi}\mathbb{Z}/q\mathbb{Z})\times\mathbb{Z}/m\mathbb{Z} \quad \text{and} \quad \mathbb{Z}/n\mathbb{Z}\] are non-isomorphic groups of order $n$.
\end{proof}

This concludes our discussion about semidirect products. We will be using it in crucial ways throughout the rest of the discussion. We are now ready to state the answer to our question.

\section{The Main Theorem}

At the end of Section 1 we concluded that it was enough to restrict our attention to the odd, squarefree integers. In Section 2 we discovered that if $n$ is squarefree and $p \mid (q-1)$ for some primes $p, q$ dividing $n$, then we have a semidirect product in addition to the cyclic group of order $n$. This is the same as saying that if $f(n)=1$ then $\operatorname{gcd}(n,\phi(n))=1$ where $\phi(n)$ is the Euler totient function.

\begin{lemma}
Let $n$ be an integer. Then the following statements are equivalent:
\begin{enumerate}[itemsep=5pt]
    \item[(a)] $n=p_1 p_2 \cdots p_k$ where the $p_i$ are distinct primes and $p_i \nmid (p_j-1)$ for $i\ne j$
    \item[(b)] $\operatorname{gcd}(n,\phi(n))=1$, where $\phi$ is the Euler $\phi$-function.
\end{enumerate}
\end{lemma}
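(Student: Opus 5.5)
We will prove both implications directly from the multiplicative formula for Euler's function. If $n=\prod_i p_i^{e_i}$ with distinct primes $p_i$ and $e_i\ge 1$, then
\[
\phi(n)=\prod_i p_i^{e_i-1}(p_i-1).
\]
The key observation is that the prime divisors of $\phi(n)$ are exactly the primes $p_i$ with $e_i\ge 2$, together with the prime divisors of the numbers $p_j-1$. Both directions reduce to comparing this set with the set of primes dividing $n$.

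For (a) $\Rightarrow$ (b), assume $n=p_1\cdots p_k$ is squarefree with $p_i\nmid(p_j-1)$ for $i\neq j$. Then $\phi(n)=\prod_j(p_j-1)$. Suppose some prime $p$ divides both $n$ and $\phi(n)$. Then $p=p_i$ for some $i$, and since $p_i$ is prime it divides some factor $p_j-1$. We cannot have $j=i$, because $p_i\nmid p_i-1$. So $j\ne i$, which contradicts the hypothesis. Hence $\gcd(n,\phi(n))=1$.

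For (b) $\Rightarrow$ (a), we argue by contrapositive. Suppose (a) fails. There are two ways this can happen.

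\emph{Case 1: $n$ is not squarefree.} Then some $p^2\mid n$. From the formula, $p$ divides $\phi(n)$ through the factor $p^{e-1}$ with $e\ge2$. So $p\mid\gcd(n,\phi(n))$.

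\emph{Case 2: $n=p_1\cdots p_k$ is squarefree but $p_i\mid(p_j-1)$ for some $i\neq j$.} Then $p_i$ divides $\phi(n)=\prod(p_j-1)$ and also divides $n$, so again $\gcd(n,\phi(n))>1$.

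In either case (b) fails, which completes the equivalence.

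The only step requiring care is the formula for $\phi$, namely its multiplicativity on coprime arguments and the value $\phi(p^e)=p^{e-1}(p-1)$. We will quote these as standard, for instance via the Chinese remainder theorem $(\mathbb Z/n\mathbb Z)^\times\cong\prod_i(\mathbb Z/p_i^{e_i}\mathbb Z)^\times$. The edge case $n=1$ needs no separate treatment: the empty product satisfies (a), and $\gcd(1,1)=1$ gives (b). Otherwise the argument is routine prime bookkeeping, and we expect no real obstacle.
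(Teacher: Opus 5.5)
Your proof is correct and takes the same route as the paper. The paper's proof consists only of the formula $\phi(n)=\prod_i p_i^{a_i-1}(p_i-1)$ and leaves the comparison of prime divisors implicit. You have written that comparison out in both directions, including the non-squarefree case, the observation $p_i\nmid(p_i-1)$, and the edge case $n=1$.
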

\begin{proof}
If $n=p_1^{a_1}p_2^{a_2}\cdots p_k^{a_k}$ then $\phi(n)=p_1^{a_1-1}p_2^{a_2-1}\cdots p_k^{a_k-1}(p_1-1)(p_2-1)\cdots(p_k-1)$.
\end{proof}

Lemma 2 allows us to restate Proposition 5 as

\begin{prop}
If $f(n)=1$ then $\operatorname{gcd}(n,\phi(n))=1$.
\end{prop}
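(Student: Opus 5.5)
We argue by contraposition: assuming $\gcd(n,\phi(n))>1$, we exhibit a non-cyclic group of order $n$, so that $f(n)>1$. Since $\mathbb{Z}/n\mathbb{Z}$ is always a group of order $n$, this suffices. By the preceding lemma (the equivalence of (a) and (b)), the failure of $\gcd(n,\phi(n))=1$ means that condition (a) fails. Hence either $n$ is not squarefree, or $n=p_1\cdots p_k$ is squarefree and $p_i \mid (p_j-1)$ for some $i\ne j$.

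First I will make the use of the lemma explicit via the formula $\phi(n)=\prod p_i^{a_i-1}(p_i-1)$. If some $a_i\ge 2$, then $p_i$ divides both $n$ and $\phi(n)$. Otherwise all $a_i=1$, and a common prime factor $p$ of $n$ and $\phi(n)=\prod(p_j-1)$ must divide some $p_j-1$. Then $p=p_i$ for some $i$, and $i\neq j$ because $p_j\nmid p_j-1$.

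In the non-squarefree case, write $n=p^2m$. The groups $\mathbb{Z}/p^2\mathbb{Z}\times\mathbb{Z}/m\mathbb{Z}$ and $\mathbb{Z}/p\mathbb{Z}\times\mathbb{Z}/p\mathbb{Z}\times\mathbb{Z}/m\mathbb{Z}$ from Section 1 are non-isomorphic, so $f(n)>1$; this is the first Observation. In the squarefree case with $p_i\mid(p_j-1)$, Proposition 5 applies directly and gives $f(n)>1$. Concretely, Proposition 5 takes the nontrivial semidirect product of $\mathbb{Z}/p_j\mathbb{Z}$ by $\mathbb{Z}/p_i\mathbb{Z}$ (as in the Example and Lemma 1) and crosses it with $\mathbb{Z}/m\mathbb{Z}$, where $m=n/(p_ip_j)$. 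The result is non-abelian, hence not isomorphic to $\mathbb{Z}/n\mathbb{Z}$.

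Combining the two cases gives the contrapositive, which proves the proposition. There is no real obstacle here. The only points needing care are the case split hidden in the lemma (the lemma as stated presupposes squarefreeness in (a), so the non-squarefree case must be handled by the Observation) and checking that the prime dividing $p_j-1$ is one of the other primes $p_i$ with $i\ne j$.
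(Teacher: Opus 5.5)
Your proposal is correct and follows the paper's route: the paper gets this proposition by combining the squarefreeness Observation, Proposition 5, and the equivalence in Lemma 2. You make explicit the case split (non-squarefree versus squarefree with $p_i \mid (p_j-1)$) and the verification via $\phi(n)=\prod p_i^{a_i-1}(p_i-1)$, both of which the paper leaves implicit, and you correctly take $\mathbb{Z}/p_j\mathbb{Z}$ as the normal factor acted on by $\mathbb{Z}/p_i\mathbb{Z}$.
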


And this time the converse also holds! We thus obtain a tidy classification for integers $n$ such that there is exactly one group of order $n$.

\begin{theorem}
For a positive integer $n$, the only group of order $n$ is the cyclic group $\mathbb{Z}/n\mathbb{Z}$ if and only if $\operatorname{gcd}(n,\phi(n))=1$, where $\phi$ denotes the Euler-$\phi$ function.
\end{theorem}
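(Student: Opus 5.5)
The forward direction is already Proposition 6: if the cyclic group is the only group of order $n$, then $f(n)=1$ and hence $\gcd(n,\phi(n))=1$. So all the work is in the converse. Assume $\gcd(n,\phi(n))=1$; by Lemma 2, $n=p_1\cdots p_k$ is squarefree with $p_i\nmid(p_j-1)$ for all $i\ne j$. I will show by strong induction on $n$ that every group $G$ of order $n$ is cyclic.

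\textbf{Subgroups and quotients.} The condition $\gcd(m,\phi(m))=1$ passes to every divisor $m$ of $n$, since the relevant set of primes only shrinks. So the induction hypothesis applies to every proper subgroup and every proper quotient of $G$, and all of these are cyclic.

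\textbf{$G$ is not minimal non-abelian.} Suppose first that $G$ is non-abelian while every proper subgroup is cyclic, which the previous paragraph guarantees. I want to rule this out.
\begin{enumerate}
\item[(i)] Take a prime $p\mid n$ and a Sylow $p$-subgroup $P\cong\mathbb{Z}/p\mathbb{Z}$.
\item[(ii)] The normalizer $N_G(P)$ acts on $P$ by conjugation, which gives a homomorphism $N_G(P)/C_G(P)\to \operatorname{Aut}(P)\cong(\mathbb{Z}/p\mathbb{Z})^\times$ (Proposition 3). This group has order $p-1$.
\item[(iii)] The order $|N_G(P)/C_G(P)|$ divides $n$, so it is coprime to $p-1$: it is coprime to $p-1$ by the hypothesis $p_i\nmid(p-1)$, and it is not divisible by $p$ because $P\le C_G(P)$. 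Hence $N_G(P)=C_G(P)$.
\item[(iv)] Burnside's normal $p$-complement theorem then gives a normal subgroup $N$ with $G=N\rtimes P$ and $|N|=n/p$. This holds for every prime $p$, in particular for the largest.
\item[(v)] If one prefers to avoid citing Burnside, the alternative is the standard counting argument: if every proper subgroup is abelian and $G$ is not simple, take a minimal normal subgroup. Either way we reach $G\cong N\rtimes_\psi P$ with $N$ cyclic of order $n/p$, using induction for $N$.
\end{enumerate}

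\textbf{The semidirect product is direct.} Now $\psi:P\to\operatorname{Aut}(N)\cong(\mathbb{Z}/(n/p)\mathbb{Z})^\times$, and this automorphism group has order $\phi(n/p)=\prod_{q\ne p}(q-1)$. That order is coprime to $p$, so $\psi$ is trivial. Therefore $G\cong N\times P$, which is cyclic by the Chinese remainder theorem, since $\gcd(|N|,p)=1$.

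\textbf{Main obstacle.} The hard step is producing the normal subgroup $N$ of order $n/p$, i.e.\ a normal complement to some Sylow subgroup. Sylow counting alone only shows that the Sylow subgroup for the largest prime is normal in special cases. So I would first try Burnside's transfer theorem as in (iii)--(iv). Failing that, I would try the elementary route: $G$ is solvable because all its Sylow subgroups are cyclic (Hölder--Burnside--Zassenhaus), so $G$ has a normal subgroup of prime index $p$. That subgroup is cyclic by induction, and the automorphism-order argument above then makes the extension split trivially.
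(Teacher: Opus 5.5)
Your argument is correct. It differs from the paper's only in how the normal subgroup of prime index is obtained. The endgame is the same: that subgroup $N$ is cyclic by induction, and $\psi\colon P\to\operatorname{Aut}(N)$ is trivial because $p\nmid\phi(n/p)$, so $G\cong N\times P$ is cyclic.

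The paper first proves that every group of squarefree order is solvable (Proposition 7). It then takes a normal subgroup of prime index from the abelian quotient $G/[G,G]$ (Proposition 8), and uses the hypothesis $\gcd(n,\phi(n))=1$ only at the final splitting step. You use the hypothesis up front instead: $|N_G(P)/C_G(P)|$ divides $\gcd(n,p-1)=1$, so Burnside's normal $p$-complement theorem produces $N$ directly. It does so for every prime $p$ at once, and intersecting these complements would even show all Sylow subgroups are normal, making the induction optional.

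The price of your route is citing a transfer theorem. What it buys is rigor, because the paper's elementary Sylow-counting proof of Proposition 7 does not work as written. From $n_{p_k}\mid p_1\cdots p_{k-1}$ one cannot conclude $n_{p_k}<p_k$. This fails even within the theorem's own hypothesis, e.g.\ for $n=5\cdot7\cdot17$, where $35\equiv1\pmod{17}$. Your ``main obstacle'' remark identifies exactly this. Your fallback (solvability via H\"older--Burnside--Zassenhaus, then a normal subgroup of prime index) is the right way to repair the paper's route.

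Two cosmetic points. The ``minimal non-abelian'' framing is unnecessary, since steps (i)--(iv) prove $G$ cyclic outright. Item (v) is too vague to stand as an argument and is best dropped.
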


Proposition 6 proves that the condition is necessary. It will take us quite a bit more work to prove that it is also sufficient. We will do so by first discovering that groups of squarefree order satisfying the conditions of the theorem possess a couple of ``nice'' properties, and then showing inductively that those properties force the group to be cyclic.

\section{Groups of Squarefree Order}

We will build our group inductively, out of its subgroups. But what kind of subgroups should we look for? We have seen that abelian groups are one class of groups that we largely understand; in fact we have a precise classification of all finite abelian groups. Therefore we will try to decompose our group into abelian subgroups. Groups that permit such a subdivision are the solvable groups, so our first step is to show that any group of odd squarefree order is solvable.

\begin{prop}
Let $G$ be a group of order $p_1 p_2 \cdots p_k$, where $p_1, p_2, \dots, p_k$ are distinct primes. Then $G$ is solvable.
\end{prop}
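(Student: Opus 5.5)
We argue by strong induction on $k$, the number of prime factors of $|G|$; the case $k=1$ is immediate since then $G$ is cyclic of prime order. The plan is to find a nontrivial proper normal subgroup $N$ of $G$. Then $N$ and $G/N$ both have squarefree order with fewer prime factors, so both are solvable by induction, and an extension of a solvable group by a solvable group is solvable. Splicing a subnormal series for $N$ with the preimage of one for $G/N$ gives a series for $G$ with abelian factors. So everything reduces to showing that a group of squarefree order with $k\ge 2$ is not simple.

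To produce $N$, I will use the classical transfer-style argument for the \emph{smallest} prime, which avoids Burnside's transfer theorem and needs only Cayley's embedding. Let $p$ be the smallest prime dividing $|G|$. A Sylow $p$-subgroup $P$ is cyclic of order $p$. Since every Sylow subgroup is cyclic of prime order, $|N_G(P)/C_G(P)|$ divides $|\operatorname{Aut}(P)|=p-1$ by Proposition 1. It also divides $|G|$, whose prime factors are all at least $p$, so $N_G(P)=C_G(P)$.

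The cleanest route from here to a normal $p$-complement is the following. Let $G$ act by left multiplication on itself, giving $G\hookrightarrow S_{|G|}$, and compose with the sign map. An element $g$ of order $2$ acts as a product of $|G|/2$ transpositions, which is odd because $|G|$ is squarefree. Hence if $p=2$, the even permutations form a normal subgroup of index $2$, and we are done. For odd $p$ this trick fails, so I would instead use the more general fact that a finite group all of whose Sylow subgroups are cyclic has a normal $p$-complement for its smallest prime $p$. This is Burnside's normal complement theorem applied with $N_G(P)=C_G(P)$ (and $P$ abelian). Since the paper allows citing standard texts, as it did with \cite{DummitFoote}, I will cite Burnside's theorem there. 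Applying it, $G$ has a normal subgroup $N$ of index $p$, nontrivial since $k\ge2$.

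The main obstacle is precisely this existence of a normal subgroup: Sylow counting alone does not obviously force $n_q=1$ for some $q$. If I wanted to avoid transfer, my first attempt would be a counting argument. If no Sylow subgroup were normal, the elements of prime order alone would exceed $|G|$, since $n_{p_i}\ge p_i+1$ for each $i$. Each prime-order subgroup contributes $p_i-1$ nonidentity elements, giving $\sum_i n_{p_i}(p_i-1)$ elements. That bound is not quite enough in general, so I expect to fall back on Burnside's theorem.

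Once $N$ is found, the induction closes as described in the first paragraph.
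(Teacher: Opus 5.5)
Your proof is correct, and it takes a genuinely different route from the paper's.

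\textbf{The paper's route and its gap.} The paper works at the \emph{largest} prime $p_k$ and tries to get $n_{p_k}=1$ from Sylow's theorems alone. It argues that because every prime factor of $n_{p_k}$ is less than $p_k$, one has $n_{p_k}<p_k$. That inference is a non sequitur. For $|G|=30$, the value $n_5=6$ satisfies $6\equiv 1 \pmod 5$ and $6\mid 6$. For $|G|=105$, the value $n_7=15$ is equally allowed by Sylow's theorems. This is exactly the obstacle you flagged: Sylow counting by itself does not force some $n_q=1$.

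The conclusion $n_{p_k}=1$ does in fact hold for squarefree order, but the natural proof passes through your argument. Take the normal $p_1$-complement $N$. Every Sylow $p_k$-subgroup of $G$ lies in $N$, since $p_k\nmid[G:N]$. Then apply induction to $N$.

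\textbf{Your route.} You work instead at the \emph{smallest} prime $p$. There $\gcd(|G|,p-1)=1$ forces $N_G(P)=C_G(P)$, and Burnside's normal $p$-complement theorem then yields a normal subgroup of index $p$. The extension step closes the induction.

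\textbf{Comparison.} Your route costs a transfer-theoretic theorem rather than bare Sylow counting. However, the paper's seemingly more elementary route is not valid as written. Yours also directly produces a normal subgroup of prime index, which is the content of the paper's Proposition 8. Your sign argument on the regular representation is a nice self-contained proof of the case $p=2$.
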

\begin{proof}
We proceed by induction on $k$, the number of prime factors.

If $k=1$, then $|G|=p_1$ is prime, so $G$ is cyclic and hence abelian. Therefore $G$ is solvable.

Now assume $k\ge 2$ and that every group whose order is a product of fewer than $k$ distinct primes is solvable. Let $|G|=p_1p_2\cdots p_k$,
where the $p_i$ are distinct, and let $p_k$ be the largest prime dividing $|G|$. Consider the number $n_{p_k}$ of Sylow $p_k$-subgroups of $G$. By Sylow's theorems,
\[
n_{p_k}\equiv 1 \pmod{p_k} \quad \text{and} \quad 
n_{p_k}\mid p_1p_2\cdots p_{k-1}.
\]

Since $n_{p_k}$ divides the product of the primes $p_1,\dots,p_{k-1}$, every prime divisor of $n_{p_k}$ is strictly smaller than $p_k$. Thus $n_{p_k}<p_k$. 
But $n_{p_k}\equiv 1 \pmod{p_k}$, and the only positive integer less than $p_k$ that is congruent to $1$ modulo $p_k$ is $1$. Hence $n_{p_k}=1$. 
Therefore the Sylow $p_k$-subgroup $P$ is unique and hence normal in $G$.

Since $|P|=p_k$, the subgroup $P$ is cyclic of prime order and therefore abelian. In particular, $P$ is solvable. Moreover,
\[
|G/P|=p_1p_2\cdots p_{k-1},
\]
which is a product of $k-1$ distinct primes. By the induction hypothesis, the quotient group $G/P$ is solvable.
We now have a short exact sequence
\[
1 \longrightarrow P \longrightarrow G \longrightarrow G/P \longrightarrow 1,
\]
with both $P$ and $G/P$ solvable. Since an extension of a solvable group by a solvable group is solvable, it follows that $G$ is solvable.

By induction, every group of order $p_1p_2\cdots p_k$, where the $p_i$ are distinct primes, is solvable.
\end{proof}


Proposition 7 guarantees that we have enough abelian subgroups inside $G$. Now we have to find a way to take two of them of the right size and ``glue'' them together. The way we imagine groups being built out of smaller pieces is that if $G$ is a finite group and $H\lhd G$ then $G$ is built out of $H$ and $G/H$. Thus, we can break down a group into smaller pieces if it has a nontrivial normal subgroup of the right index.

\begin{prop}
Let $G$ be a group of order $p_1 p_2 \cdots p_k$, where $p_1, p_2, \dots, p_k$ are distinct primes. Then $G$ has a normal subgroup of prime index.
\end{prop}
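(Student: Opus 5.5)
We will use Proposition 7, which says that $G$ is solvable, and run the standard argument that a nontrivial finite solvable group has a normal subgroup of prime index. If $k=1$ there is nothing to prove: the trivial subgroup is normal of prime index $p_1$. So assume $k\ge 2$, so that $G$ is nontrivial.

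First, since $G$ is solvable and nontrivial, its derived series $G=G^{(0)}\supseteq G^{(1)}\supseteq\cdots$ reaches $\{e\}$ after finitely many steps. In particular $G'=[G,G]$ is a proper subgroup of $G$; otherwise the series would stall at $G\neq\{e\}$. The subgroup $G'$ is characteristic, hence normal, and the quotient $A=G/G'$ is a nontrivial abelian group.

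Next we produce a subgroup of prime index in $A$. Its order $|A|=|G|/|G'|$ divides $p_1\cdots p_k$, so it is squarefree and greater than $1$. Pick a prime $p$ dividing $|A|$. By Cauchy's theorem, or by the classification of finite abelian groups, $A$ has an element of order $p$; call the subgroup it generates $C$. Since $|A|$ is squarefree, $A$ is in fact cyclic of order $|A|$. Rather than $C$, we want the unique subgroup $B\le A$ of order $|A|/p$. This $B$ has index $p$ in $A$, and it is normal because $A$ is abelian.

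Finally we pull back along the quotient map. Let $\pi:G\to A$ be the quotient map and set $N=\pi^{-1}(B)$. By the correspondence theorem $N\lhd G$, and $[G:N]=[A:B]=p$, which is prime.

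The only step that needs care is showing that $G'\neq G$, and that is exactly where Proposition 7 is used. If one prefers to avoid the derived series, the same conclusion follows from a composition-series argument. Because $G$ is solvable, all its composition factors are cyclic of prime order. Take a maximal normal subgroup $M$ of $G$, which exists since $G$ is finite and nontrivial. Then $G/M$ is simple and, being a quotient of a solvable group, is solvable. A simple solvable group is cyclic of prime order, so $[G:M]$ is prime. I would present this second version in the write-up, since it is shorter.
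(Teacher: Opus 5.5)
Your proposal is correct, and your first argument is exactly the paper's. Proposition 7 gives $G'\neq G$, the abelian quotient $G/G'$ of squarefree order has a subgroup of prime index $p$, and its preimage is normal of index $p$ in $G$.

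Your version is slightly tidier than the paper's. It covers $G'=\{e\}$ uniformly, a case the paper leaves at ``then $G$ is abelian''. It also obtains the index-$p$ subgroup from the cyclicity of $G/G'$ rather than from a loose appeal to Cauchy's theorem. The detour through the subgroup $C$ is unused and can simply be dropped.

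The maximal-normal-subgroup version you plan to write up is a repackaging rather than a new idea. Showing that the simple solvable group $G/M$ has prime order is the same derived-subgroup step, now applied to $G/M$ instead of $G$: $(G/M)'$ is proper and normal, hence trivial, so $G/M$ is simple abelian, i.e.\ $\mathbb{Z}/p\mathbb{Z}$.

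That version is shorter and makes plain that only solvability is used, so it proves the statement for every nontrivial finite solvable group. The $G/G'$ route gives a little more control: you may choose the index to be any prime dividing $[G:G']$. Either is fine to present.
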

Recall that the \textit{commutator subgroup} $[G,G]$ of $G$, is the subgroup generated by all elements of the form $[g,h]=ghg^{-1}h^{-1}$. The quotient $G/[G,G]$ is always abelian. We will be using both these facts in the proof below.
\begin{proof}
By Proposition 7, $G$ is solvable, so the commutator subgroup $G' = [G, G]$ is not equal to $G$. Then $G'$ is either $\{e\}$ or a proper subgroup of $G$. If $G' = \{e\}$, then $G$ is abelian.

Suppose $G'$ is a proper subgroup of $G$. Then (after reordering, if necessary) $|G'| = p_1 p_2 \cdots p_j$ where $1 \le j < k$. So the quotient group $G/G'$ is an abelian group of order $p_{j+1} \cdots p_k$. Therefore by Cauchy's theorem $G/G'$ has a normal subgroup $H/G'$ of order $p_{j+1} \cdots p_{k-1}$. Hence $H$ is also a normal subgroup of $G$ and $|H| = p_1 p_2 \cdots p_{k-1}$, so $[G:H] = p_k$.
\end{proof}

Now we have everything we need to prove Theorem 3.1.

\section{Proof of Theorem 3.1.}

Suppose $\operatorname{gcd}(n,\phi(n))=1$. Then $n=p_1 p_2 \cdots p_k$ for distinct primes $p_i$ and $p_i \nmid (p_j-1)$ for $i\ne j$. We show that $\mathbb{Z}/n\mathbb{Z}$ is the only group of order $n$. We use induction on $k$, the number of prime factors of $n$.

If $k=1$ then $n=p_1$ i.e. $n$ is a prime. Since every group of prime order is cyclic, $\mathbb{Z}/n\mathbb{Z}$ is the only group of order $n$.

Assume that the result is true for $k=r$ i.e. for $n=p_1 p_2 \cdots p_r$, $\mathbb{Z}/n\mathbb{Z}$ is the only group of order $n$. We will show that the result is true for $k=r+1$ i.e. for $n=p_1 p_2 \cdots p_{r+1}$. 

Let $G$ be a group of order $n=p_1 p_2 \cdots p_{r+1}$.
By Proposition 8, $G$ has a normal subgroup, $H$, of index $p_i$ for some $i \in \{1,\dots,r+1\}$. After reordering, if necessary, we can assume that $H=\mathbb{Z}/m\mathbb{Z}$, where $m=p_1 p_2 \cdots p_r$ and $\operatorname{gcd}(m,\phi(m))=1$. Take $K=\mathbb{Z}/p_{r+1}\mathbb{Z}$ and consider the semidirect product of $H$ and $K$. It exists, because the semidirect product of any two groups exists. Since $H$ is a cyclic group, we have \[|\operatorname{Aut}(H)|=(p_1-1)\cdots(p_r-1).\] Then any homomorphism from $K$ to $\operatorname{Aut}(H)$ must be a trivial homomorphism, because otherwise it would contradict $\operatorname{gcd}(n,\phi(n))=1$. Therefore the semidirect product of $H$ and $K$ is actually just the direct product of $H$ and $K$, which is a cyclic group of order $p_1 p_2 \cdots p_{r+1}$. Since $H$ is the only group of order $m$, $\mathbb{Z}/n\mathbb{Z}$ is the only group of order $n=p_1 p_2 \cdots p_{r+1}$.

This establishes Theorem 3.1.

\medskip
Those $n$, for which $f(n)=1$, are tabulated as sequence \href{http://oeis.org/A003277}{A003277} at \href{http://oeis.org}{oeis.org}

\medskip
\begin{center}
\begin{minipage}{10cm}
\begin{spacing}{1.5}
    1, 2, 3, 5, 7, 11, 13, 15, 17, 19, 23, 29, 31, 33, 35, 37, 41, 43, 47, 51, 53, 59, 61, 65, 67, 69, 71, 73, 77, 79, 83, 85, 87, 89, 91, 95, 97, 101, 103, 107, 109, 113, 115, 119, 123, 127, 131, 133, 137, 139, 141, 143, 145, 149, 151, 157, 159, 161, 163, 167, 173 \ldots
\end{spacing}
\end{minipage}

    \vskip -3mm
    
    {\small \color{gray}  From ``The On-Line Encyclopedia of Integer Sequences''}
\end{center}

\medskip
This concludes our discussion of integers $n$ such that there is exactly one group (up to isomorphism) of order $n$. 

\medskip 
We now move on to explore how many groups there are of order $p^3$ for a given prime $p$.

\section{Groups of Order $p^3$}

For each prime $p$, we want to describe the groups of order $p^3$ up to isomorphism. This was done for $p=2$ by Cayley in 1854 and for odd $p$ by Cole \& Glover, H\"older, and Young independently in 1893.

From the cyclic decomposition of finite abelian groups, there are three abelian groups of order $p^3$ up to isomorphism: \[\mathbb{Z}/p^3\mathbb{Z}, \quad \mathbb{Z}/p^2\mathbb{Z}\times\mathbb{Z}/p\mathbb{Z}, \quad \text{and} \quad  \mathbb{Z}/p\mathbb{Z}\times\mathbb{Z}/p\mathbb{Z}\times\mathbb{Z}/p\mathbb{Z}.\] These are nonisomorphic since they have different maximal orders for their elements: $p^3$, $p^2$, and $p$ respectively.

There are two nonabelian groups of order $p^3$ up to isomorphism. The descriptions of these two groups will be different for $p=2$ and $p\ne2$.

\begin{theorem}
A nonabelian group of order 8 is isomorphic to $D_4$ or to $Q_8$.
\end{theorem}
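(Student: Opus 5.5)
Let $G$ be a nonabelian group of order $8$. The plan is to find an element of order $4$, take the cyclic subgroup it generates, and pin down how a second element acts on it and what its square is.

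First, $G$ has no element of order $8$, since otherwise $G$ would be cyclic and hence abelian. Also, not every non-identity element can have order $2$: if $g^2=e$ for all $g$, then $gh=(gh)^{-1}=h^{-1}g^{-1}=hg$, so $G$ would be abelian. By Lagrange, element orders divide $8$, so there is some $\rho\in G$ of order $4$. Let $R=\langle\rho\rangle$. It has index $2$, so $R\lhd G$. Pick $\tau\notin R$; then $G=R\cup\tau R$ and $G=\langle\rho,\tau\rangle$.

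Next, determine the action of $\tau$ on $R$. Since $R$ is normal, $\tau\rho\tau^{-1}\in R$ and has order $4$, so it equals $\rho$ or $\rho^{-1}$. If it were $\rho$, then $\tau$ would commute with $\rho$ and $G=\langle\rho,\tau\rangle$ would be abelian. Hence
\[
\tau\rho\tau^{-1}=\rho^{-1}.
\]
Now determine $\tau^2$. Since $G/R$ has order $2$, we have $\tau^2\in R$. Also $\tau^2$ commutes with $\tau$, so
\[
\tau^2=\tau\tau^2\tau^{-1}=(\tau^2)^{\text{conjugated}},
\]
and conjugation by $\tau$ inverts every element of $R$. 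Therefore $\tau^2$ equals its own inverse, so $\tau^2\in\{e,\rho^2\}$; the possibilities $\tau^2=\rho^{\pm1}$ are excluded.

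This splits into two cases.

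\textbf{Case $\tau^2=e$.} Here $G$ satisfies the relations $\rho^4=\tau^2=e$ and $\tau\rho\tau^{-1}=\rho^{-1}$, which are the defining relations of $D_4$. Since $G$ has order $8$, the map $\rho\mapsto\rho$, $\tau\mapsto\tau$ from $D_4$ is a surjective homomorphism between groups of order $8$, hence an isomorphism. Equivalently, $G\cong R\rtimes_\psi\langle\tau\rangle$ by the complement proposition (Proposition 2), with $\psi$ acting by inversion.

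\textbf{Case $\tau^2=\rho^2$.} Every element of $\tau R$ then has order $4$, so $G$ has no complement to $R$. Instead, send $\rho\mapsto i$ and $\tau\mapsto j$ in $Q_8$. The elements $i,j$ satisfy $i^4=1$, $j^2=i^2$ and $jij^{-1}=i^{-1}$. Writing every element of $G$ uniquely as $\rho^a\tau^b$ with $0\le a<4$ and $b\in\{0,1\}$, the multiplication is determined by these relations. The map $\rho^a\tau^b\mapsto i^aj^b$ is therefore a bijective homomorphism, so $G\cong Q_8$.

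The main obstacle is making the identification step rigorous without a formal presentation theory. I would handle it by the normal form $\rho^a\tau^b$: the relations give explicit multiplication rules for these normal forms, and the same rules hold in the target group, so the map respects multiplication. It is worth remarking that $D_4\not\cong Q_8$, since $D_4$ has five elements of order $2$ while $Q_8$ has only one; this is not needed for the statement itself.
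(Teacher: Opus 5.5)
The paper states this theorem without proof: it only describes $Q_8$ and then defers to Conrad's survey. So there is no argument of the paper's to compare with. Your proof is correct and complete, and it is the standard one.

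The key steps check out:
\begin{itemize}
\item Nonabelianness rules out both an element of order $8$ and exponent $2$. So $R=\langle\rho\rangle$ of order $4$ exists, and it is normal.
\item $\tau\rho\tau^{-1}=\rho^{-1}$ is forced.
\item $\tau^2\in R$ is fixed by conjugation by $\tau$, which inverts $R$. This gives $\tau^2=\tau\tau^2\tau^{-1}=(\tau^2)^{-1}$, which is what your display with ``conjugated'' should say. Hence $\tau^2\in\{e,\rho^2\}$.
\end{itemize}

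There are two small points of rigor.

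In the case $\tau^2=e$, appealing to ``the defining relations of $D_4$'' presupposes a presentation of $D_4$. The paper never establishes one; it only says $D_n$ is generated by $\rho,\tau$ with $\rho\tau=\tau\rho^{-1}$. Your normal-form argument works verbatim here. Your semidirect-product alternative fits the paper best: $\langle\tau\rangle$ is a complement of $R$, so by the complement proposition (Proposition 4 in the paper's numbering) $G\cong\mathbb{Z}/4\mathbb{Z}\rtimes_\psi\mathbb{Z}/2\mathbb{Z}$ with $\psi$ the inversion. $D_4$ splits in the same way over its rotation subgroup.

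In the case $\tau^2=\rho^2$, matching multiplication rules only show that $\rho^a\tau^b\mapsto i^aj^b$ is a homomorphism. Bijectivity needs the one-line check that the eight products $i^aj^b$ are $1,i,-1,-i,j,k,-j,-k$, which are all distinct.

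Your proof also makes visible something worth stating explicitly: $Q_8$ is exactly the non-split case. Every nontrivial subgroup of $Q_8$ contains $-1$, so $Q_8$ is not a semidirect product of nontrivial subgroups at all. That is why it needs an argument beyond the semidirect-product construction the paper relies on elsewhere.
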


\begin{eg}[The Quaternion Group $Q_8$]\phantom{.}\newline 
$Q_8=\{1,-1,i,j,k,-i,-j,-k\}$ is the group of order 8 with the multiplication rules \[-1=i^2=j^2=k^2=ijk.\] The element 1 represents the identity and $(-1)^2=1$ and $-1$ is in the center (so $i(-1)=-i$, $j(-1)=-j$, etc.). Then $Q_8$ has the following subsets:
\[ \{1\},\{1,-1\},\{1,-1,i,-i\},\{1,-1,j,-j\},\{1,-1,k,-k\},Q_8 \]
Every subgroup of $Q_8$ is normal in $Q_8$. We know that if $G$ is an abelian group then all subgroups of $G$ are normal. However the group $Q_8$ is non-abelian and yet all of its subgroups are normal.
\end{eg}

\begin{theorem}
For primes $p\ne2$, a nonabelian group of order $p^3$ is isomorphic to $\operatorname{Heis}(\mathbb{Z}/p\mathbb{Z})$ or $G_p$, where
\[ \operatorname{Heis}(\mathbb{Z}/p\mathbb{Z})=\left\{\begin{pmatrix}1&a&b\\ 0&1&c\\ 0&0&1\end{pmatrix}:a,b,c\in\mathbb{Z}/p\mathbb{Z}\right\} \]
and
\[ G_p=\left\{\begin{pmatrix}a&b\\ 0&1\end{pmatrix}:a,b\in\mathbb{Z}/p^2\mathbb{Z},a\equiv1\pmod p\right\}=\left\{\begin{pmatrix}1+pm&b\\ 0&1\end{pmatrix}:m,b\in\mathbb{Z}/p^2\mathbb{Z}\right\} \]
\end{theorem}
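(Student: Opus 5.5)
Let $G$ be a nonabelian group of order $p^3$ with $p$ odd. The plan is to first pin down the center and the commutator subgroup, and then split into two cases according to whether $G$ has an element of order $p^2$.

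\textbf{Center and commutator subgroup.} Since $G$ is a $p$-group, $Z(G)\neq\{e\}$. If $|Z(G)|$ were $p^2$ or $p^3$, then $G/Z(G)$ would be cyclic, forcing $G$ to be abelian. Hence $|Z(G)|=p$ and $G/Z(G)\cong \mathbb{Z}/p\mathbb{Z}\times\mathbb{Z}/p\mathbb{Z}$. Because $G/Z(G)$ is abelian, $[G,G]\subseteq Z(G)$, and $[G,G]\ne\{e\}$, so $[G,G]=Z(G)$. Consequently commutators are central, and the identities
\[
[x,yz]=[x,y][x,z], \qquad (xy)^n = x^n y^n [y,x]^{\binom{n}{2}}
\]
hold in $G$. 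Taking $n=p$ with $p$ odd, $p\mid\binom p2$, so $(xy)^p=x^py^p$. Thus $x\mapsto x^p$ is a homomorphism $G\to Z(G)$, and it is here that $p\ne 2$ is used. No element has order $p^3$, since $G$ is not cyclic.

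\textbf{Case 1: every nonidentity element has order $p$.} Choose $x,y$ whose images generate $G/Z(G)$ and set $z=[x,y]$. Then $z$ generates $Z(G)$, and every element of $G$ can be written uniquely as $x^ay^cz^b$. The relations $x^p=y^p=z^p=e$, $z$ central and $[x,y]=z$ determine the multiplication table. In $\operatorname{Heis}(\mathbb{Z}/p\mathbb{Z})$, take $X,Y$ to be the elementary matrices with entries $a=1$ and $c=1$ respectively. A direct computation shows that $X,Y$ have order $p$ and that $[X,Y]$ is the matrix with $b=\pm1$, which is central. The assignment $x\mapsto X$, $y\mapsto Y$ therefore respects all the relations, so it gives a surjective homomorphism between groups of order $p^3$, which is an isomorphism.

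\textbf{Case 2: some $x$ has order $p^2$.} Put $N=\langle x\rangle$. It has index $p$, so it is normal, and $x^p$ generates $Z(G)$ since $Z(G)$ is the unique subgroup of order $p$ in $N$ that is central. The $p$-power map is a homomorphism whose image is $Z(G)$, so its kernel has order $p^2$. Since $N\not\subseteq\ker$, we can pick $y\in\ker\setminus N$; then $y$ has order $p$ and $\langle y\rangle\cap N=\{e\}$. By the earlier proposition on complements, $G\cong N\rtimes_\psi\langle y\rangle$ for a nontrivial $\psi$. Hence $yxy^{-1}=x^{k}$ with $k\in(\mathbb{Z}/p^2\mathbb{Z})^\times$ of order $p$, i.e.\ $k\equiv 1\pmod p$ and $k\neq 1$. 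Replacing $y$ by a suitable power $y^j$ changes $k$ to $k^j$; since the subgroup $\{1+pm\}$ is cyclic of order $p$, we may normalize to $k=1+p$. This shows that at most one such group exists. In $G_p$, take $X=\begin{pmatrix}1&1\\0&1\end{pmatrix}$, of order $p^2$, and $Y=\begin{pmatrix}1+p&0\\0&1\end{pmatrix}$, of order $p$. Then $YXY^{-1}=\begin{pmatrix}1&1+p\\0&1\end{pmatrix}=X^{1+p}$, so $G_p$ is exactly this semidirect product, and it has order $p^3$.

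Finally, $\operatorname{Heis}$ has exponent $p$ while $G_p$ contains an element of order $p^2$, so the two groups are distinct. The main obstacle is the normalization in Case 2: one must find a complement of order $p$, which relies on the $p$-power map being a homomorphism for odd $p$, and must check that different choices of $k$ give isomorphic groups. The rest is routine matrix computation.
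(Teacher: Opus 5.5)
The paper does not prove this theorem. It states it in the section on groups of order $p^3$ and refers the reader to Conrad's notes, so there is no in-paper argument to compare yours against. Judged on its own, your proof is correct and essentially complete:
\begin{itemize}
\item The reduction to $|Z(G)|=p$ and $[G,G]=Z(G)$ is right.
\item The identity $(xy)^p=x^py^p[y,x]^{\binom{p}{2}}$ is valid once commutators are central. With $p$ odd, it makes $x\mapsto x^p$ a homomorphism into $Z(G)$.
\item In Case 2, the kernel count produces a complement of order $p$ to $\langle x\rangle$, and the normalization $k\mapsto k^j=1+p$ settles uniqueness.
\item Your matrices behave as claimed: $[X,Y]$ is the central matrix with $a=c=0$, $b=1$, and $YXY^{-1}=X^{1+p}$ in $G_p$.
\end{itemize}

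Three small points should be tightened.
\begin{itemize}
\item In Case 1 you need $z=[x,y]\neq e$ before saying it generates $Z(G)$. If $x$ and $y$ commuted, $G=\langle x,y\rangle Z(G)$ would be abelian.
\item In Case 2, your reason that $x^p$ generates $Z(G)$ (``the unique central subgroup of order $p$ in $N$'') presupposes $Z(G)\subseteq N$. The direct reason is that $x^p\in Z(G)$, because $G/Z(G)$ has exponent $p$, and $x^p\neq e$.
\item ``The assignment respects the relations, hence is a homomorphism'' is best made precise through the normal form. From $[x,y]=z$ central one derives
\[
x^ay^cz^b\cdot x^{a'}y^{c'}z^{b'}=x^{a+a'}y^{c+c'}z^{b+b'-ca'}
\]
using only the relations. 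This rule therefore holds in both $G$ and $\operatorname{Heis}(\mathbb{Z}/p\mathbb{Z})$, so $x^ay^cz^b\mapsto X^aY^cZ^b$ is visibly multiplicative.
\end{itemize}

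On the route: Case 1 could instead be done in parallel with Case 2. You would write $G\cong(\mathbb{Z}/p\mathbb{Z})^2\rtimes\mathbb{Z}/p\mathbb{Z}$, using any normal subgroup of order $p^2$ and any element outside it. You would then invoke the fact that all subgroups of order $p$ in $\mathrm{GL}_2(\mathbb{Z}/p\mathbb{Z})$ are conjugate, since they are Sylow subgroups. Your normal-form argument avoids that extra input and is more elementary.
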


Keith Conrad \cite{2} summarizes what is known about the count of groups of small $p$-power order.
\begin{itemize}[itemsep=5pt]
    \item There is one group of order $p$ up to isomorphism $(\mathbb{Z}/p\mathbb{Z})$.
    \item There are two groups of order $p^2$ up to isomorphism: $\mathbb{Z}/p^2\mathbb{Z}$ and $\mathbb{Z}/p\mathbb{Z}\times\mathbb{Z}/p\mathbb{Z}$.
    \item There are five groups of order $p^3$ up to isomorphism, but the explicit description of them is not uniform in $p$ since the case $p=2$ needs a separate treatment.
\end{itemize}
For groups of order $p^4$, the count is no longer uniform in $p$: there are 14 groups of order $2^4$ and 15 groups of order $p^4$ for $p\ne2$. This is due to H\"older and Young.

\section{Further Results and Conjectures}

One of the first mathematicians to make advances in the enumeration of finite groups was Otto H\"older. In 1893, he described groups of order $p^3$ and $p^4$. Shortly thereafter, he derived a remarkable formula for the number of groups of order $n$ when $n$ is square-free.

\begin{theorem}[H\"older, 1895]
The number of groups of order $n$, where $n$ is square-free is given by
\[ f(n)=\sum_{m \mid n}\prod_{p}\frac{p^{c(p)}-1}{p-1} \]
where $p$ runs over all prime divisors of $n/m$ and $c(p)$ is the number of prime divisors $q$ of $m$ that satisfy $q\equiv1\pmod p$.
\end{theorem}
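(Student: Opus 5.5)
The plan is to prove H\"older's formula by a structure theorem followed by a counting argument. A group $G$ of squarefree order $n$ has all Sylow subgroups cyclic. The first step is the classical Burnside/Zassenhaus fact that such a group is metacyclic. More precisely, $G' = [G,G]$ is cyclic of order $m$, the quotient $G/G'$ is cyclic of order $n/m$, and $\gcd(m, n/m)=1$.

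To get there I would refine the proof of Proposition 8, using the solvability from Proposition 7. I would show that $G'$ is a normal Hall subgroup. By Schur--Zassenhaus it then has a cyclic complement $C\cong \mathbb{Z}/(n/m)\mathbb{Z}$. So $G\cong \mathbb{Z}/m\mathbb{Z}\rtimes_\psi \mathbb{Z}/(n/m)\mathbb{Z}$ for some $\psi: C\to \operatorname{Aut}(\mathbb{Z}/m\mathbb{Z})\cong(\mathbb{Z}/m\mathbb{Z})^\times$, by the first proposition of Section 2. The condition that $\mathbb{Z}/m\mathbb{Z}$ is exactly the commutator subgroup translates into a condition on $\psi$. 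For every prime $q\mid m$, the induced action on the $q$-part $\mathbb{Z}/q\mathbb{Z}$ must be nontrivial, since otherwise $[G,G]$ would miss the factor $q$.

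Next I would count, for fixed $m\mid n$, the isomorphism classes of such semidirect products. By the Chinese remainder theorem, $(\mathbb{Z}/m\mathbb{Z})^\times\cong\prod_{q\mid m}(\mathbb{Z}/q\mathbb{Z})^\times$, and $C\cong\prod_{p\mid n/m}\mathbb{Z}/p\mathbb{Z}$. So $\psi$ is determined, for each pair $(p,q)$ with $p\mid n/m$ and $q\mid m$, by a homomorphism $\mathbb{Z}/p\mathbb{Z}\to(\mathbb{Z}/q\mathbb{Z})^\times$. Such a homomorphism is nontrivial only when $q\equiv1\pmod p$, and then there are $p$ choices. Thus for each $p$ the data is a vector in $(\mathbb{Z}/p\mathbb{Z})^{c(p)}$. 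The commutator condition requires each $q\mid m$ to receive a nontrivial action from at least one $p$.

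Isomorphisms act on this data in two ways. Changing the generator of the $p$-part of $C$ scales that $p$-vector by an element of $(\mathbb{Z}/p\mathbb{Z})^\times$. The choice of generator of each $\mathbb{Z}/q\mathbb{Z}$ does not change $\psi$, because $\operatorname{Aut}$ is abelian. The main claim to verify is that these are the only identifications. Then each $p$ contributes either the zero vector or one of $(p^{c(p)}-1)/(p-1)$ projective classes, which produces the factor $\prod_p\frac{p^{c(p)}-1}{p-1}$.

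The hardest step is reconciling this with the commutator condition and with uniqueness of $m$. I would handle it by showing that isomorphic groups have the same $G'$, hence the same $m$, which makes the sum over $m$ disjoint. I would also check, via conjugacy of Hall complements, that two such semidirect products are isomorphic iff their data agree up to the rescalings above. There is also a subtlety to check: H\"older's product formula as stated appears to count the case where some $p$-vector is zero as contributing $0$ rather than $1$ only when $c(p)=0$. So I expect the bookkeeping to need care to confirm that primes $p$ acting trivially are correctly absorbed. A trivially acting $p$ would centralize $\mathbb{Z}/m\mathbb{Z}$, which is consistent, but this must match the count. I would test the formula on $n=pq$ and $n=2\cdot3\cdot7$ to calibrate before finalizing.
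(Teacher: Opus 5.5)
The paper only quotes H\"older's formula and gives no proof, so your argument has to stand on its own. The structural reduction is sound: cyclic $G'$, a cyclic complement, the commutator condition, and rescaling by $\operatorname{Aut}(C)$. The counting fails, though, because $m=|G'|$ is the wrong index for the sum.

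With $m=|G'|$, each $p\mid n/m$ contributes either the zero vector or a line in $(\mathbb{Z}/p\mathbb{Z})^{c(p)}$. That is $1+\frac{p^{c(p)}-1}{p-1}$ options, not $\frac{p^{c(p)}-1}{p-1}$. Moreover, your commutator condition (every $q\mid m$ is moved by some $p$) couples different primes $p$, so the number of groups with $|G'|=m$ is not a product over $p$ at all. The $m$-th summand of H\"older's formula does not count groups with $|G'|=m$:
\begin{itemize}
\item For $n=pq$ with $p\mid q-1$, the cyclic group has $|G'|=1$, yet the $m=1$ summand is $0$ and the $m=n$ summand is $1$.
\item For $n=42$, the three groups $\mathbb{Z}/7\mathbb{Z}\rtimes\mathbb{Z}/6\mathbb{Z}$, $\mathbb{Z}/2\mathbb{Z}\times(\mathbb{Z}/7\mathbb{Z}\rtimes\mathbb{Z}/3\mathbb{Z})$ and $\mathbb{Z}/3\mathbb{Z}\times D_7$ all have $|G'|=7$, while the $m=7$ summand is $1$.
\item Also for $n=42$, the $m=21$ summand is $3$, although only $D_{21}$ has $|G'|=21$.
\end{itemize}
Only the totals agree. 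The ``subtlety'' you flagged is exactly this. H\"older's factor never admits the zero vector, for any $c(p)$; it is $0$ when $c(p)=0$ only because there are then no nonzero vectors. So every prime of $n/m$ must act nontrivially, which the cyclic group violates under your indexing.

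The missing idea is to absorb the trivially acting part of the complement into the cyclic normal subgroup, so that the complement acts faithfully. Write $G=G'\rtimes C$ and $Z=\ker\psi\le C$. Then $Z=Z(G)$, and $A=G'\times Z=C_G(G')$ is cyclic and characteristic. A complement $C_1$ of $Z$ in $C$ acts faithfully on $A$.

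A cleaner route to all of this, which also replaces your vague appeal to Proposition 8, goes through the Fitting subgroup:
\begin{itemize}
\item $F(G)$ is nilpotent of squarefree order, hence cyclic.
\item It is self-centralizing because $G$ is solvable, so $G/F(G)$ embeds in the abelian group $\operatorname{Aut}(F(G))$.
\item Hence $G'\le F(G)$ is cyclic, and Schur--Zassenhaus gives a cyclic complement acting faithfully.
\end{itemize}

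Index the sum by $m=|A|=|G'|\,|Z(G)|=|F(G)|$, which is an isomorphism invariant. Then $G\cong\mathbb{Z}/m\mathbb{Z}\rtimes_\psi\mathbb{Z}/(n/m)\mathbb{Z}$ with $\psi$ injective. There is no covering condition, since a prime $q\mid m$ moved by no $p$ is simply central, and every $p\mid n/m$ acts nontrivially.

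An isomorphism between two such groups carries $A$ onto $A$ and intertwines the conjugation actions. Since $\operatorname{Aut}(\mathbb{Z}/m\mathbb{Z})\cong(\mathbb{Z}/m\mathbb{Z})^\times$ is abelian, this forces $\operatorname{im}\psi=\operatorname{im}\psi'$. Conversely, maps with equal images differ by an automorphism of $\mathbb{Z}/(n/m)\mathbb{Z}$ and give isomorphic groups.

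So for fixed $m$, the isomorphism classes correspond to subgroups of order exactly $n/m$ in $(\mathbb{Z}/m\mathbb{Z})^\times$. Equivalently, they correspond to a choice of a nonzero line in $(\mathbb{Z}/p\mathbb{Z})^{c(p)}$ for each $p\mid n/m$, which gives exactly $\prod_p\frac{p^{c(p)}-1}{p-1}$. No conjugacy of Hall complements is needed.
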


A natural question that arises from H\"older's formula is: for $n$ squarefree, can we relate $f(n)$ to $n$ more explicitly? McIver and Neumann determined that $f(n)\le n^4$ for $n$ square-free. An even better bound is known: $f(n)\le\phi(n)$ where $\phi$ is Euler's $\phi$-function. For squarefree $n=p_1 p_2 \cdots p_r$ and greater than 1, this last result implies that
\[ f(n)\le\phi(n)=(p_1-1)(p_2-1)\cdots(p_r-1)<n. \]
Furthermore, if $n$ is even and squarefree, then $p_1=2$ and
\[ f(n)\le\phi(n)=1(p_2-1)\cdots(p_r-1)<\frac{n}{2}. \]

Another direction is to understand the asymptotic behavior of $f(n)$ when $n$ is square-free. In this light, define
\[ M:=\limsup_{n\rightarrow\infty}\frac{\log f(n)}{\log n} \]
where the limit superior ranges just over squarefree integers $n$. Erd\"os, Murty, and Murty have shown that $M=1$. Their proof uses Dirichlet's Theorem on primes in arithmetic progressions, among other techniques.

We mention one final, curious conjecture in the enumeration of finite groups:

\begin{conj}
    The group enumeration function is surjective. 
\end{conj}

\medskip 
That is, for every positive integer $m$, the conjecture asserts that there exists $n$ such that $f(n)=m$. 
This conjecture may well be resolved through consideration of squarefree $n$, largely because of H\"older's formula. Indeed, it has been verified that every $m$ less than 10,000,000 is equal to $f(n)$ for some squarefree $n$.


\end{document}